\documentclass{amsart}
\usepackage{graphicx}
\usepackage[arrow,matrix,curve,color]{xy}
\usepackage{color}
\usepackage{tikz}
\definecolor{light-blue}{rgb}{0.8,0.85,1}
\definecolor{light-red}{rgb}{1,.4,.4}
\definecolor{purp}{rgb}{.7,.3,1}
\definecolor{yel}{rgb}{1,1,.5}
\definecolor{cy}{rgb}{0,1,1}
\newtheorem{theorem}{Theorem}
\newtheorem{corollary}[theorem]{Corollary}

\theoremstyle{definition}
\newtheorem{example}[theorem]{Example}

\newtheorem{remark}[theorem]{Remark}

\newcommand{\bR}{\mathbb R}

\newcommand{\bH}{\mathbb H}

\newcommand{\bZ}{\mathbb Z}

\newcommand{\bP}{\mathbb P}

\newcommand{\bQ}{\mathbb Q}

\newcommand{\tK}{\widetilde K}

\newcommand{\lp}{\textup{(}}
\newcommand{\rp}{\textup{)}}

\newcommand{\Sq}{\operatorname{Sq}}

\newcommand{\rank}{\operatorname{rank}}

\newcommand{\ev}{\text{\textup{even}}}
\newcommand{\co}{\colon\,}


\title[$K$-theory of $4$-complexes]{Complex $K$-theory of $4$-complexes}
\author{Jonathan Rosenberg}
\address{Department of Mathematics\\
University of Maryland\\
College Park, MD 20742-4015, USA} 
\email{jmr@umd.edu}
\urladdr{http://www2.math.umd.edu/\raisebox{-3pt}{~}jmr}
\makeatletter
\@namedef{subjclassname@2020}{%
  \textup{2020} Mathematics Subject Classification}
\makeatother

\begin{document}
\begin{abstract}
  This short note summarizes a number of facts about the ring $K^0(X)$ for
  $X$ a $4$-dimensional CW-complex.  Unusual features of this
  dimension are that every complex vector bundle is determined up to stable
  isomorphism by its Chern classes, that every even cohomology class 
  arises as a Chern class of a vector bundle, and that $K^0(X)$ is completely
  determined as a ring by knowledge of the even-dimensional cohomology
  ring $H^\ev(X; \bZ)$.  (All of these fail in high dimensions.)
\end{abstract}
\keywords{$K$-theory, Chern class, Chern character}
\subjclass[2020]{Primary 19L64; Secondary 19L10 55R50 55R40}

\maketitle

\section{Introduction}
\label{sec:intro}
This short note was motivated by work the author has done 
\cite{MR4236456,Breakdown} in trying to study the famous
``gap-labeling conjecture'' of Bellissard \cite{bellissard:K}.
Study of a number of incomplete proofs of this conjecture
has led to the conclusion \cite[\S9]{MR4236456} that
if a tiling space $\Omega$ of dimension $d$ has
the property that the Chern
character $\text{ch}\co K^*(\Omega)\to H^*(\Omega; \bQ)$
factors through $H^*(\Omega; \bZ)$, then the conjecture holds.
This is always the case if $d\le 3$, so we were led to try
to find a counterexample to the conjecture in dimension $d=4$.
That is what led to a particular interest in the Chern character
for $4$-dimensional complexes.  (Tiling spaces are not usually
CW complexes, but they are inverse limits of finite complexes
via inverse systems that are easy to describe, so knowing
what happens for finite CW complexes would actually suffice for
purposes of the applications to tiling spaces.)

A special feature of dimension $4$ is that if $X$ is a
$4$-dimensional CW-complex, then the Atiyah-Hirzebruch spectral
sequence for computing $K^*(X)$ from $H^*(X; \bZ)$ collapses.
Nevertheless, $K^0(X)$ is \emph{not} necessarily isomorphic
to $H^\ev(X; \bZ)$, as one can see from the example of $X=\bR\bP^4$,
for which the reduced cohomology is all $2$-torsion whereas
$\tK(X)\cong \bZ/4$.  However, as we shall see, the structure
of $K^0(X)$ for $X$ a $4$-dimensional CW-complex can always be
computed, even as a ring, from knowledge of $H^\ev(X; \bZ)$ as a ring
under cup product.

We do not claim any great originality for the results here,
as a similar problem for $KO^0(X)$ was studied a long time ago
in \cite{MR123331}, and it's quite possible that some of what we do
here is already known to experts, but we have not been able to find explicit
references in the literature for our main theorems, so we thought it
would be good to document them. I would like to thank Michael Albanese
for some typo corrections and for pointing me to his post \cite{Alb}.

\section{Main Results}
\label{sec:results}

Throughout this paper, $X$ will be a connected $4$-dimensional CW
complex.  All of the interesting questions show up already when $X$ is
a finite complex.  First, we address the surjectivity of the Chern class
maps $c_1\co K^0(X)\to H^2(X;\bZ)$ and $c_2\co K^0(X)\to H^4(X;\bZ)$.
\begin{theorem}
\label{thm:Chernsurj}
Let $X$ be a connected $4$-dimensional CW complex.  Then for each
$x\in H^2(X;\bZ)$, there is a unique {\lp}up to isomorphism{\rp} line
bundle $L_x$ over $X$ with $c_1(L_x)=x$, and for each $y\in H^4(X;\bZ)$,
there is a unique {\lp}up to isomorphism{\rp} rank-$2$ complex vector 
bundle $V_y$ over $X$ with $c_1(V_y)=0$ and $c_2(V_y)=y$.  
\end{theorem}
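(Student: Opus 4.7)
Both statements come from identifying the relevant classifying space $BG$ and showing that, for $X$ a $4$-dimensional CW complex, the natural map from $[X,BG]$ to Chern data in cohomology is a bijection.

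For line bundles this is classical and needs no dimension hypothesis: $BU(1)=\bC\bP^\infty=K(\bZ,2)$, so $c_1$ induces a bijection between isomorphism classes of complex line bundles on $X$ and $H^2(X;\bZ)$, giving both existence and uniqueness of $L_x$.

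For rank-$2$ bundles I would work with $BU(2)$ and prove that the joint map $(c_1,c_2)\co [X,BU(2)]\to H^2(X;\bZ)\oplus H^4(X;\bZ)$ is a bijection when $\dim X\le 4$; the statement for $c_1=0$ then follows by restricting to the preimage of $0\oplus H^4(X;\bZ)$. My plan is to use the Postnikov tower of $BU(2)$. The relevant homotopy groups are $\pi_2=\bZ$ (detected by $c_1$), $\pi_3=0$, $\pi_4=\bZ$ (detected by $c_2$), and $\pi_i$ finite for $i\ge 5$ (since $\pi_i(BU(2))=\pi_{i-1}(U(2))$ and $U(2)\simeq U(1)\times SU(2)$ through the finite cover, with $\pi_{\ge 4}(S^3)$ finite). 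The natural map
\[
\phi=(c_1,c_2)\co BU(2)\longrightarrow K(\bZ,2)\times K(\bZ,4)
\]
realizes the second Postnikov section, because the first $k$-invariant lives in $H^5(K(\bZ,2);\bZ)=H^5(\bC\bP^\infty;\bZ)=0$ and therefore vanishes, so the second stage is literally the product. The fiber of $\phi$ is then $4$-connected, every subsequent $k$-invariant of $BU(2)$ lies in $H^{\ge 6}(-;\pi_{\ge 5})$, and indeterminacies in lifts lie in $H^{\ge 5}(-;\pi_{\ge 5})$. For $X$ a $4$-dimensional CW complex these groups all vanish, so standard obstruction theory shows $\phi$ induces a bijection $[X,BU(2)]\xrightarrow{\cong} H^2(X;\bZ)\oplus H^4(X;\bZ)$.

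The key step I expect to need care with is the vanishing of the first $k$-invariant, though this is immediate once one recalls that $H^*(K(\bZ,2);\bZ)=\bZ[x]$ with $|x|=2$ is concentrated in even degrees, so $H^5=0$. After that the argument is purely dimension-counting: because $\dim X\le 4$, the higher Postnikov stages of $BU(2)$ contribute nothing to $[X,BU(2)]$, and classification reduces to maps into $K(\bZ,2)\times K(\bZ,4)$. This is precisely why the result is special to dimension $4$; in higher dimensions the $k$-invariants (and cohomology operations like $\Sq^2$ or higher Chern class relations) come into play and the simple bijection breaks down.
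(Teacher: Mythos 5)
Your proof is correct, but it takes a somewhat different and in fact stronger route than the paper. The paper treats line bundles classically (as you do) and then handles only the $c_1=0$ case of rank-$2$ bundles by working with $BSU(2)\cong\bH\bP^\infty$: since this space is $3$-connected with $\pi_4\cong\bZ$ and $\pi_5\cong\bZ/2$, one builds $K(\bZ,4)$ from it by attaching cells in dimensions $\ge 6$, so $[X,BSU(2)]\to H^4(X;\bZ)$ is bijective for $\dim X\le 4$ with no $k$-invariant ever entering the picture. You instead work with $BU(2)$ and prove the joint classification $[X,BU(2)]\cong H^2(X;\bZ)\oplus H^4(X;\bZ)$ via $(c_1,c_2)$, which requires the extra observation that the first $k$-invariant vanishes because $H^5(K(\bZ,2);\bZ)=0$; your obstruction-theoretic bookkeeping (fiber of $\phi$ is $4$-connected, obstructions in $H^{\ge 6}(X;\pi_{\ge 5})$ and indeterminacies in $H^{\ge 5}(X;\pi_{\ge 5})$ vanish on a $4$-complex) is accurate, though you should also note that $c_2$ really does induce an isomorphism on $\pi_4(BU(2))\cong\bZ$ (true since $c_n$ on $\pi_{2n}(BU(n))$ is multiplication by $(n-1)!$, which is $1$ for $n=2$) so that $\phi$ is genuinely a Postnikov section. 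What your approach buys is precisely the strengthening recorded in the paper's remark after Corollary \ref{cor:Chernclass} (existence and uniqueness of a rank-$2$ bundle with \emph{arbitrary} prescribed $c_1$ and $c_2$, not just $c_1=0$); what the paper's approach buys is economy, since $BSU(2)$ being $3$-connected makes the splitting of the relevant Postnikov stage automatic and the argument reduces to counting cells of $\bH\bP^\infty$.
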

\begin{proof}
  The first statement about line bundles works in any dimension, since
  $H^2$ always classifies complex line bundles.  The second statement,
  however, is specific to dimension $4$.  Note that the classifying space
  for rank-$2$ complex vector bundles with vanishing $c_1$ is
  $BSU(2)\cong \bH\bP^\infty$.  This has a CW structure with one cell
  in each dimension divisible by $4$, and the attaching maps given by
  the quaternionic Hopf fibrations.  Now $\bH\bP^\infty$ is $3$-connected
  and has $\pi_4(\bH\bP^\infty)\cong \bZ$.  Its next non-zero homotopy
  group is $\pi_5(\bH\bP^\infty)\cong \pi_4(SU(2))=\pi_4(S^3)\cong \bZ/2$.
  Starting with $\bH\bP^\infty$, we can construct a model for $K(\bZ,4)$
  by iteratively killing the higher homotopy groups, starting by killing
  $\pi_5$ by attaching a $6$-cell.  In this way we get an
  inclusion $\bH\bP^\infty\to K(\bZ, 4)$ where the larger space $K(\bZ,4)$
  has additional cells starting only in dimension $6$.  Thus for
  $X$ a $4$-dimensional CW complex, the map
  \[
    [X, BSU(2)]\xrightarrow{c_2} [X, K(\bZ,4)]\cong H^4(X;\bZ)
  \]
  is an isomorphism, and this gives the desired conclusion.
\end{proof}
\begin{remark} Note that in high dimensions, the Chern class maps
  are usually not surjective onto the integral cohomology.  For example,
  by \cite[Corollary 4.4]{VB}, the image of
  $c_n\co K^0(S^{2n})\to H^{2n}(X;\bZ)$ is a subgroup of index
  $(n-1)!$.
\end{remark}

The first part of following fact is well known,
but the part about the Chern classes may not be.
\begin{theorem}
\label{thm:Chernclasssplitting}
  Let $X$ be a connected $4$-dimensional CW complex.  Then the
  Atiyah-Hirzebruch spectral sequence $H^p(X;K^q)\Rightarrow K^{p+q}(X)$
  collapses at $E_2$ {\lp}i.e., there are no differentials{\rp}.
  The edge homomorphisms
  \[
  H^2(X;\bZ)\to K^0(X)\quad \text{and}\quad H^4(X;\bZ)\to K^0(X)
  \]
  give splittings for the Chern class maps $c_1\co K^0(X)\to H^2(X;\bZ)$
  and $c_2\co K^0(X)\to H^4(X;\bZ)$.
\end{theorem}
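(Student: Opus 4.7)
The plan is to establish the two parts of the theorem separately. For the collapse of the Atiyah-Hirzebruch spectral sequence $E_2^{p,q}=H^p(X;K^q(\pt))\Rightarrow K^{p+q}(X)$, I would check each potential differential in turn. Since $K^q(\pt)=\bZ$ for $q$ even and vanishes otherwise, every even-indexed differential $d_{2k}$ changes the parity of $q$ and so has trivial source or target, hence vanishes. The first possibly nonzero differential is $d_3\co E_3^{p,q}\to E_3^{p+3,q-2}$, which by a classical identification of Atiyah is given on $E_3=E_2$ by the integral Steenrod operation $\beta\circ\Sq^2\circ\rho$. For a four-dimensional $X$ the target $H^{p+3}(X;\bZ)$ vanishes for $p\ge 2$, so only $p=0,1$ require attention; and $\Sq^2$ kills every mod-$2$ class of degree at most $1$, so these $d_3$'s also vanish. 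Higher differentials $d_r$ with $r\ge 5$ cannot reach any surviving group for range reasons, so $E_\infty=E_2$.

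For the splittings, I would use Theorem~\ref{thm:Chernsurj} to define
\[
s_1\co H^2(X;\bZ)\to K^0(X),\ x\mapsto [L_x]-1,
\quad\text{and}\quad
s_2\co H^4(X;\bZ)\to K^0(X),\ y\mapsto [V_y]-2,
\]
both well defined by the uniqueness clauses in Theorem~\ref{thm:Chernsurj}. Multiplicativity of the total Chern class on $K$-theory, $c_t(\xi-\eta)=c_t(\xi)/c_t(\eta)$, yields $c_t(s_1(x))=1+xt$ and $c_t(s_2(y))=1+yt^2$, so $c_1\circ s_1=\id_{H^2}$ and $c_2\circ s_2=\id_{H^4}$. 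Because $s_2(y)$ has vanishing rank and vanishing $c_1$, it lies in the bottom filtration piece $F^4 K^0(X)$, which by the collapse of the AHSS is exactly $H^4(X;\bZ)$; this identifies $s_2$ with the bottom-edge inclusion produced by the spectral sequence. The map $s_1$ takes values in $F^2 K^0(X)$ and realises a section of the surjection $F^2/F^4\cong H^2(X;\bZ)$, which is the other edge map.

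The main technical input is Atiyah's formula $d_3=\beta\circ\Sq^2\circ\rho$; this is standard but needs to be cited, after which the collapse is straightforward low-degree bookkeeping. A subtlety worth flagging is that $s_1$ is not a group homomorphism in general: a direct calculation gives $s_1(x+x')-s_1(x)-s_1(x')=(L_x-1)(L_{x'}-1)\in F^4 K^0(X)$, with $c_2$-value $\pm x\cup x'$. Thus one obtains only a set-theoretic splitting, which is consistent with the statement since $c_2\co K^0(X)\to H^4(X;\bZ)$ is itself not additive.
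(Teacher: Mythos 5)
Your proposal is correct and follows essentially the same route as the paper: the collapse is obtained from the identification of the first differential with $\Sq^3=\beta\circ\Sq^2\circ\rho$ plus degree counting on a $4$-complex, and the splittings are realized by the explicit classes $[L_x]-1$ and $[V_y]-2$ supplied by Theorem~\ref{thm:Chernsurj}, located in the skeletal filtration (the paper phrases this via $X/X^{(2)}$ and $X^{(3)}$ rather than via $F^2\supseteq F^4$, a cosmetic difference). Your closing remark that $s_1$ is only a set-theoretic section, with additivity defect $([L_x]-1)([L_{x'}]-1)\in F^4$, is a worthwhile clarification that the paper leaves implicit.
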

\begin{proof}
  It is known that the first differential in the Atiyah-Hirzebruch
  spectral sequence for complex $K$-theory is $\Sq^3$, which vanishes
  identically on $4$-complexes.  So that proves the first statement.

  Now the Atiyah-Hirzebruch spectral sequence
  comes from the skeletal filtration $X^{(j)}$
  of $X$, so $E_\infty^{4,0}=K^0(X,X^{(2)})\cong K^0(X/X^{(2)})$,
  and $X/X^{(2)}$ is a wedge of $3$-spheres with $4$-cells attached. The
  edge homomorphism $K^0(X,X^{(2)})\to K^0(X)$ is just the pull-back map
  on $K$-theory under the quotient map $q\co X\to X/X^{(2)}$.  
  Since $X/X^{(2)}$ has only one even (reduced) cohomology group, namely $H^4$,
  $K^0(X/X^{(2)})\cong H^4(X/X^{(2)};\bZ)\cong H^4(X;\bZ)$, and
  because of Theorem \ref{thm:Chernsurj}, we see that $q^*$ is a splitting
  for $c_2$.  Similarly, $E_\infty^{2,0}=K^0(X^{(3)},X^{(1)})\cong
  \tK^0(X^{(3)})\cong H^2(X^{(3)};\bZ)$, and the edge homomorphism gives
  a splitting for $c_1$ again via Theorem \ref{thm:Chernsurj}.
\end{proof}
\begin{corollary}
\label{cor:Chernclass}
  Let $X$ be a connected $4$-dimensional CW complex, and let $V$ be a
  complex vector bundle over $X$.  Then $V$ is determined up to
  stable isomorphism by its rank {\lp}in $H^0(X;\bZ)\cong\bZ${\rp}, its first
  Chern class $c_1$ {\lp}in $H^2(X;\bZ)${\rp}, and its second
  Chern class $c_2$ {\lp}in $H^4(X;\bZ)${\rp}.
\end{corollary}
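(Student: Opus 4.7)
The plan is to use Theorem \ref{thm:Chernclasssplitting} to realize $K^0(X)$ as an internal direct sum, and then read off the three coordinates of $[V]$ from $(\rank V,\,c_1(V),\,c_2(V))$.

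Write $F^0\supset F^2\supset F^4\supset 0$ for the AHSS filtration on $K^0(X)$. By Theorem \ref{thm:Chernclasssplitting} the successive quotients are $H^0(X;\bZ)\cong\bZ$, $H^2(X;\bZ)$, and $H^4(X;\bZ)$, and we have splittings $s_1\co H^2(X;\bZ)\to K^0(X)$ of $c_1$ and $s_2\co H^4(X;\bZ)\to K^0(X)$ of $c_2$; together with the rank splitting $s_0\co\bZ\to K^0(X)$, $r\mapsto r[\bC]$, these assemble into a group homomorphism
\[
\Psi\co\bZ\oplus H^2(X;\bZ)\oplus H^4(X;\bZ)\to K^0(X),\qquad (r,x,y)\mapsto s_0(r)+s_1(x)+s_2(y).
\]
A short diagram chase on the two split short exact sequences $0\to F^4\to F^2\to H^2\to 0$ and $0\to F^2\to K^0(X)\to H^0\to 0$ shows that $\Psi$ is an isomorphism of abelian groups.

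Now let $V$ have rank $r$, $c_1(V)=x$, $c_2(V)=y$, and write $\Psi^{-1}([V])=(r',x',y')$. The rank map, which is a homomorphism killing $F^2$, gives $r'=r$. The first Chern class $c_1\co K^0(X)\to H^2(X;\bZ)$ is also a group homomorphism, and vanishes on $s_0(\bZ)$ and on $F^4\supset s_2(H^4)$, so evaluating at $[V]$ yields $x'=c_1(V)=x$.

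The subtle step, which I expect to be the main obstacle, is identifying $y'$ with $y$, because $c_2$ is \emph{not} additive on all of $K^0(X)$. Here I would represent the class $s_0(r)+s_1(x)$ by the honest bundle $W:=\bC^{r-1}\oplus L_x$ (which has $c_1(W)=x$, $c_2(W)=0$), so that $[V]-[W]\in F^4$ and equals $s_2(y')$. On $F^4$ the map $c_2$ \emph{is} additive and splits $s_2$, so $y'=c_2([V]-[W])$. This last quantity can be computed from the multiplicative extension of the total Chern class to $K^0(X)$: one has $c(V)=1+x+y$ and $c(W)=1+x$, and on a $4$-dimensional complex $H^{\geq6}(X;\bZ)=0$ forces $x^3=0$ and $xy=0$, so
\[
c([V]-[W])=(1+x+y)(1-x+x^2)=1+y.
\]
Hence $c_2([V]-[W])=y$, giving $y'=y$ and therefore $[V]=\Psi(r,x,y)$. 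Since $[V]=[W]$ in $K^0(X)$ is exactly stable isomorphism of $V$ and $W$, the corollary follows. The dimension-$4$ hypothesis enters precisely to kill the cross-terms $x^3$ and $xy$ that would otherwise obstruct this computation, which is the same reason higher Chern classes cannot create further ambiguity.
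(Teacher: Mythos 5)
Your overall strategy --- decompose $K^0(X)$ via the split filtration of Theorem \ref{thm:Chernclasssplitting} and check that the three coordinates of $[V]$ are recoverable from $(\rank V, c_1(V), c_2(V))$ --- is exactly the content of the paper's one-line proof, and your use of the multiplicativity of the total Chern class to handle the non-additivity of $c_2$ is the right idea. One step is over-claimed, however: you assert $[W]=s_0(r)+s_1(x)$ for $W=\bC^{r-1}\oplus L_x$, which amounts to saying the edge-homomorphism splitting satisfies $s_1(x)=[L_x]-1$. That need not hold: $x\mapsto [L_x]-1$ is not additive (its total Chern class is $1+x$, and $(1+x)(1+x')\ne 1+x+x'$ whenever $xx'\ne 0$), whereas $s_1$ is a homomorphism by construction; so in general $[W]=s_0(r)+s_1(x)+s_2(z)$ with $z=-c_2(s_1(x))$ possibly nonzero, and your computation actually yields $y'=y+z$ rather than $y'=y$. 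This does not damage the corollary, since $z$ depends only on $x$, so $(r',x',y')$ remains a function of $(\rank V,c_1(V),c_2(V))$ and injectivity follows --- but the literal identity $[V]=\Psi(r,x,y)$ may fail. A cleaner finish that avoids the issue entirely: apply your Chern-class computation to $\alpha=[V]-[V']$ for two bundles with equal rank, $c_1$, and $c_2$; then $\alpha$ has rank $0$ and $c(\alpha)=1$, hence $\alpha\in F^4$ with $c_2(\alpha)=0$, and injectivity of $c_2$ on $F^4=s_2(H^4(X;\bZ))$ gives $\alpha=0$.
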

\begin{proof}
  This is immediate from the theorem, since $\tK^0(X)$ is an extension of
  $H^4(X;\bZ)$ by $H^2(X;\bZ)$, and these correspond in turn to $c_2$ and $c_1$
  by the last part of Theorem \ref{thm:Chernclasssplitting}.
\end{proof}
\begin{remark}
  A stronger version of Corollary \ref{cor:Chernclass} appears in
  \cite{Alb}, where it is shown that one can remove the word
  ``stable,'' and it is also shown that one can also strengthen Theorem
  \ref{thm:Chernsurj} to get a unique bundle of any rank $\ge 2$ with
  specified $c_1$ and $c_2$.
\end{remark}
  
Now we can state our main result: an algorithm for computing $K^0(X)$
as a ring given the cup product structure of the ring $H^\ev(X; \bZ)$.
Once again, we emphasize that these rings are not necessarily isomorphic,
just that the former can be computed from the latter.

\begin{theorem}
\label{thm:KfromH}
Let $X$ be a connected finite $4$-dimensional CW complex.  Then we
can describe $K^0(X)$ as a ring as follows.  It is generated over
$\bZ$ by the classes of the vector bundles $L_x$, $x\in H^2(X;\bZ)$,
and $V_y$, $y\in H^4(X;\bZ)$, as in Theorem \ref{thm:Chernsurj}.
These are subject to the following relations:
\begin{enumerate}
\item $[L_0]=1$, $[V_0]=2;$
\item $[L_x]\cdot [L_{x'}] = [L_{x+x'}]$, for $x,x'\in H^2;$
\item $[L_x]+ [L_{-x}] = [V_{-x^2}]$, for $x\in H^2;$  
\item $[V_y]+[V_{y'}]=2+[V_{y+y'}]$, for $y,y'\in H^4;$
\item $[V_y]\cdot[V_{y'}] = 2 + [V_{2y+2y'}]$, for $y,y'\in H^4;$
\item $[L_x]\cdot [V_y] = [L_{2x}] + [V_{x^2+y}] - 1$, for
  $x\in H^2$, $y\in H^4$;
\item $[L_x]+ [L_{x'}] = [L_{x+x'}] + [V_{xx'}] - 1$, for
	 $x,x'\in H^2$.
\end{enumerate}
\end{theorem}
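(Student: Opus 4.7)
The plan is to introduce the abstract ring $R$ with the given presentation, to define a ring homomorphism $\phi\co R\to K^0(X)$ by sending each generator to the class of the corresponding bundle, and to show that $\phi$ is a bijection.

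First I would verify that each of the seven relations holds in $K^0(X)$. By Corollary~\ref{cor:Chernclass}, any identity of virtual bundles on $X$ is detected by the triple $(\rank,c_1,c_2)$, so each verification reduces to a short Chern-class computation. The additive relations (3), (4), (7) are consequences of the Whitney formula $c(V\oplus W)=c(V)c(W)$; for instance, $c(L_x\oplus L_{x'})=(1+x)(1+x')=1+(x+x')+xx'$ matches the Chern data of $[L_{x+x'}]+[V_{xx'}]-1$. The multiplicative relations (2), (5), (6) follow from the splitting principle: introducing formal Chern roots $\alpha_1+\alpha_2=0$, $\alpha_1\alpha_2=y$ for $V_y$ (and similarly $\beta_j$'s for $V_{y'}$), one finds that $V_y\otimes V_{y'}$ has Chern roots $\alpha_i+\beta_j$, yielding $c_1=0$ and $c_2=2y+2y'$, and $L_x\otimes V_y$ has Chern roots $x+\alpha_i$, yielding $c_1=2x$ and $c_2=x^2+y$.

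Once the relations are verified, the ring homomorphism $\phi\co R\to K^0(X)$ is well-defined. Surjectivity is immediate: a total-Chern-class computation shows that $(n-3)+[L_x]+[V_y]$ has rank $n$, $c_1=x$, and $c_2=y$ in $K^0(X)$, so by Theorem~\ref{thm:Chernsurj} and Corollary~\ref{cor:Chernclass} every element of $K^0(X)$ arises this way.

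The main obstacle is injectivity. The plan here is to establish a normal form: every element of $R$ should be equal, in $R$, to an expression $(n-3)+[L_x]+[V_y]$. Relations (2), (5), (6) convert each monomial in the generators into a $\bZ$-linear combination of generators. Relations (3) and (4) give the negation identities $-[L_x]=[L_{-x}]-[V_{-x^2}]$ and $-[V_y]=[V_{-y}]-4$, so after absorbing subtractions one has a finite sum of $[L]$'s and $[V]$'s. Iterating (4) collapses all $[V]$-summands into one $[V]$ plus an integer, and iterating (7) followed by (4) collapses all $[L]$-summands into a single $[L]$ plus a single $[V]$ plus an integer. With the normal form in hand, injectivity of $\phi$ follows from Corollary~\ref{cor:Chernclass}, since distinct triples $(n,x,y)$ produce distinct $(\rank,c_1,c_2)$-data in $K^0(X)$. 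The main difficulty is thus the bookkeeping in this reduction; no deeper mathematical input is required.
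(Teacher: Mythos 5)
Your proposal is correct and follows essentially the same route as the paper: both verify each relation by comparing rank, $c_1$, and $c_2$ and invoking Corollary \ref{cor:Chernclass} (your formal Chern roots $\alpha_1+\alpha_2=0$, $\alpha_1\alpha_2=y$ are exactly the paper's Hirzebruch-style formal variable $\xi$ with $-\xi^2=y$), and both get surjectivity from the observation that $(n-3)+[L_x]+[V_y]$ realizes an arbitrary triple $(\rank,c_1,c_2)=(n,x,y)$. The one place you go further is the explicit injectivity step: you show the listed relations alone suffice to reduce every element of the abstract ring $R$ to the normal form $(n-3)+[L_x]+[V_y]$, so that the presentation is complete; the paper leaves this reduction implicit, and your bookkeeping (negation via (3) and (4), collapsing $V$-sums via (4) and $L$-sums via (7)) is a worthwhile addition rather than a different method.
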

\begin{proof}
  The classes of the $L_x$ and $V_y$ generate $K^0$, since
  $c(L_x\oplus V_y) = c(L_x)c(V_y)=(1+x)(1+y)=1+x+y$, and thus
  $L_x\oplus V_y$ has Chern classes $c_1=x$ and $c_2=y$, which are as
  general as possible, since $x\in H^2$ and $y\in H^4$ are unconstrained.
  (The rank can be adjusted by repeatedly adding or subtracting $1$.)
  Applying Corollary \ref{cor:Chernclass}, we see that
  $\bZ$-linear combinations of $1$, the $[L_x]$, and the $[V_y]$ exhaust $K^0$.
  
  (1) is clear, since $L_0$ and $V_0$ are trivial bundles of ranks $1$
  and $2$, respectively.  (2) is the usual relation in the topological
  Picard group. (3) follows from the relation
  \[
  c(L_x \oplus L_{-x}) = (1+x)(1-x)=1-x^2,
  \]
  together with Corollary \ref{cor:Chernclass}, which says that since
  $V_{-x^2}$ has the same rank and Chern classes as $L_x \oplus   L_{-x}$,
  they must coincide up to stable isomorphism.  Similarly, (4)
  follows from the calculation of
  \[
  c(V_y \oplus V_{y'}) = (1+y)(1+y')=1+y+y',\quad
  \rank(V_y \oplus V_{y'}) = 2 + 2 = 4.
  \]
  The rest of the multiplicative structure is determined,
  following a trick of Hirzebruch \cite[Theorem 4.4.3]{MR1335917}, by
  introducing, for each $y\in H^4$, a formal variable $\xi$ with
  $-\xi^2=y$. Note that if $\xi$ were an actual class in $H^2$, we
  would have a corresponding line bundle $L_\xi$ with
  $[L_\xi]+ [L_{-\xi}] = [V_y]$.  So we pretend that this is the case,
  getting
  $[L_x]\cdot [V_y] =
  [L_x\otimes L_\xi] + [L_x\otimes L_{-\xi}] = [L_{x+\xi}] + [L_{x-\xi}]$.
  Computing the Chern classes, we get
  \[
  c([L_x]\cdot [V_y]) = c([L_{x+\xi}] + [L_{x-\xi}])
  = (1 + x + \xi)(1 + x - \xi) = (1+x)^2 - \xi^2 = 1+2x+x^2+y.
  \]
  On the other hand,
  \[
  c([L_{2x}]+ [V_{x^2+y}]) = (1 + 2x)(1 + x^2 + y) = 1+2x+x^2+y.
  \]
  But $\rank([L_x]\cdot [V_y]) = 2$ and $\rank([L_{2x}]+  [V_{x^2+y}])=3$,
  so relation (6) follows, again by Corollary \ref{cor:Chernclass}.
  We can obtain relation (5) by a similar strategy, pretending that
  $[L_\xi]+ [L_{-\xi}] = [V_y]$ and $[L_{\xi'}]+ [L_{-\xi'}] = [V_{y'}]$.
  Then
  \[
  \begin{aligned}
    &[V_y]\cdot [V_{y'}] = ([L_\xi]+ [L_{-\xi}])\cdot([L_{\xi'}]+
    [L_{-\xi'}])\\
    &= [L_\xi]\cdot [L_{\xi'}] + [L_\xi]\cdot [L_{-\xi'}]
    + [L_{-\xi}]\cdot [L_{\xi'}] + [L_{-\xi}]\cdot [L_{-\xi'}]\\
    &= [L_{\xi+\xi'}] + [L_{\xi-\xi'}] + [L_{-\xi+\xi'}] +
    [L_{-\xi-\xi'}].
  \end{aligned}
  \]
  Now compute the Chern classes.  We obtain
  \[
  \begin{aligned}
    &(1+{\xi+\xi'})(1+{\xi-\xi'})(1{-\xi+\xi'})(1{-\xi-\xi'})\\
    &= (1-(\xi+\xi')^2)(1-(\xi-\xi')^2)\\
    &= (1-(\xi^2+\xi'^2+2\xi\xi')(1-(\xi^2+\xi'^2-2\xi\xi'))\\
    &= 1 - 2\xi^2 - 2\xi'^2 = 1 + 2y + 2y'.
  \end{aligned}
  \]
  Thus agrees with the Chern classes of $V_{2y+2y'}$, so (5) follows
  after we take the rank into account. (7), which generalizes (3),
  follows by the same strategy as (3); we have
  \[
	  c(L_x\oplus L_{x'})=(1+x)(1+x')=1+x+x'+xx',
  \]
  while
    \[
	    c(L_{x+x'}\oplus V_{xx'})=(1+x+x')(1+xx')=1+x+x'+xx',
  \]
so (7) follows after adjusting for the rank.
\end{proof}
\begin{example}
  Let $X=\bR\bP^4$,  Then $H^2(X;\bZ)\cong H^4(X;\bZ)\cong \bZ/2$.
  If $x$ and $y$ are additive generators of $H^2$ and $H^4$, then
  $x^2=y$.  So $K^0(X)$ is generated over $\bZ$ by $[L_x]$ and
  $[V_y]$. By Theorem \ref{thm:KfromH}, (2), (3), and (4), we have relations
  $[L_x]^2=1$, $2[L_x]=[V_y]$, $2[V_y]=4$.  So $u=[L_x]-1$ is an element of
  $\tK^0(X)$ of additive order $4$.  The multiplicative structure is
  determined by $(u+1)^2=1$, or $u^2=-2u$.  This agrees with the
  usual calculation in \cite[Corollary IV.6.47]{MR488029}.
\end{example}  
\bibliographystyle{amsplain}
\bibliography{KThy4complexes}
\end{document}